\begin{document}

\theoremstyle{plain}
\newtheorem{C}{Convention}
\newtheorem*{SA}{Standing Assumption}
\newtheorem{theorem}{Theorem}[section]
\newtheorem{condition}{Condition}[section]
\newtheorem{lemma}[theorem]{Lemma}
\newtheorem{proposition}[theorem]{Proposition}
\newtheorem{corollary}[theorem]{Corollary}
\newtheorem{claim}[theorem]{Claim}
\newtheorem{definition}[theorem]{Definition}
\newtheorem{Ass}[theorem]{Assumption}
\newcommand{\q}{Q}
\theoremstyle{definition}
\newtheorem{remark}[theorem]{Remark}
\newtheorem{note}[theorem]{Note}
\newtheorem{example}[theorem]{Example}
\newtheorem{assumption}[theorem]{Assumption}
\newtheorem*{notation}{Notation}
\newtheorem*{assuL}{Assumption ($\mathbb{L}$)}
\newtheorem*{assuAC}{Assumption ($\mathbb{AC}$)}
\newtheorem*{assuEM}{Assumption ($\mathbb{EM}$)}
\newtheorem*{assuES}{Assumption ($\mathbb{ES}$)}
\newtheorem*{assuM}{Assumption ($\mathbb{M}$)}
\newtheorem*{assuMM}{Assumption ($\mathbb{M}'$)}
\newtheorem*{assuL1}{Assumption ($\mathbb{L}1$)}
\newtheorem*{assuL2}{Assumption ($\mathbb{L}2$)}
\newtheorem*{assuL3}{Assumption ($\mathbb{L}3$)}
\newtheorem{charact}[theorem]{Characterization}
\newcommand{\notiz}{\textup} 
\renewenvironment{proof}{{\parindent 0pt \it{ Proof:}}}{\mbox{}\hfill\mbox{$\Box\hspace{-0.5mm}$}\vskip 16pt}
\newenvironment{proofthm}[1]{{\parindent 0pt \it Proof of Theorem #1:}}{\mbox{}\hfill\mbox{$\Box\hspace{-0.5mm}$}\vskip 16pt}
\newenvironment{prooflemma}[1]{{\parindent 0pt \it Proof of Lemma #1:}}{\mbox{}\hfill\mbox{$\Box\hspace{-0.5mm}$}\vskip 16pt}
\newenvironment{proofcor}[1]{{\parindent 0pt \it Proof of Corollary #1:}}{\mbox{}\hfill\mbox{$\Box\hspace{-0.5mm}$}\vskip 16pt}
\newenvironment{proofprop}[1]{{\parindent 0pt \it Proof of Proposition #1:}}{\mbox{}\hfill\mbox{$\Box\hspace{-0.5mm}$}\vskip 16pt}
\newcommand{\s}{\u s}

\newcommand{\Law}{\ensuremath{\mathop{\mathrm{Law}}}}
\newcommand{\loc}{{\mathrm{loc}}}
\newcommand{\Log}{\ensuremath{\mathop{\mathscr{L}\mathrm{og}}}}
\newcommand{\Meixner}{\ensuremath{\mathop{\mathrm{Meixner}}}}
\newcommand{\of}{[\hspace{-0.06cm}[}
\newcommand{\gs}{]\hspace{-0.06cm}]}

\let\MID\mid
\renewcommand{\mid}{|}

\let\SETMINUS\setminus
\renewcommand{\setminus}{\backslash}

\def\stackrelboth#1#2#3{\mathrel{\mathop{#2}\limits^{#1}_{#3}}}

\renewcommand{\theequation}{\thesection.\arabic{equation}}
\numberwithin{equation}{section}

\newcommand\llambda{{\mathchoice
      {\lambda\mkern-4.5mu{\raisebox{.4ex}{\scriptsize$\backslash$}}}
      {\lambda\mkern-4.83mu{\raisebox{.4ex}{\scriptsize$\backslash$}}}
      {\lambda\mkern-4.5mu{\raisebox{.2ex}{\footnotesize$\scriptscriptstyle\backslash$}}}
      {\lambda\mkern-5.0mu{\raisebox{.2ex}{\tiny$\scriptscriptstyle\backslash$}}}}}

\newcommand{\prozess}[1][L]{{\ensuremath{#1=(#1_t)_{0\le t\le T}}}\xspace}
\newcommand{\prazess}[1][L]{{\ensuremath{#1=(#1_t)_{0\le t\le T^*}}}\xspace}

\newcommand{\tr}{\operatorname{tr}}
\newcommand{\lijepoa}{{\mathscr{A}}}
\newcommand{\lijepob}{{\mathscr{B}}}
\newcommand{\lijepoc}{{\mathscr{C}}}
\newcommand{\lijepod}{{\mathscr{D}}}
\newcommand{\lijepoe}{{\mathscr{E}}}
\newcommand{\lijepof}{{\mathscr{F}}}
\newcommand{\lijepog}{{\mathscr{G}}}
\newcommand{\lijepok}{{\mathscr{K}}}
\newcommand{\lijepoo}{{\mathscr{O}}}
\newcommand{\lijepop}{{\mathscr{P}}}
\newcommand{\lijepoh}{{\mathscr{H}}}
\newcommand{\lijepom}{{\mathscr{M}}}
\newcommand{\lijepou}{{\mathscr{U}}}
\newcommand{\lijepov}{{\mathscr{V}}}
\newcommand{\lijepoy}{{\mathscr{Y}}}
\newcommand{\cF}{{\mathscr{F}}}
\newcommand{\cG}{{\mathscr{G}}}
\newcommand{\cH}{{\mathscr{H}}}
\newcommand{\cM}{{\mathscr{M}}}
\newcommand{\cD}{{\mathscr{D}}}
\newcommand{\bD}{{\mathbb{D}}}
\newcommand{\bF}{{\mathbb{F}}}
\newcommand{\bG}{{\mathbb{G}}}
\newcommand{\bH}{{\mathbb{H}}}
\newcommand{\dd}{\operatorname{d}\hspace{-0.05cm}}
\newcommand{\ddd}{\operatorname{d}}
\newcommand{\er}{{\mathbb{R}}}
\newcommand{\ce}{{\mathbb{C}}}
\newcommand{\erd}{{\mathbb{R}^{d}}}
\newcommand{\en}{{\mathbb{N}}}
\newcommand{\de}{{\mathrm{d}}}
\newcommand{\im}{{\mathrm{i}}}
\newcommand{\indik}{{\mathbf{1}}}
\newcommand{\D}{{\mathbb{D}}}
\newcommand{\E}{E}
\newcommand{\N}{{\mathbb{N}}}
\newcommand{\Q}{{\mathbb{Q}}}
\renewcommand{\P}{{\mathbb{P}}}
\newcommand{\ud}{\operatorname{d}\!}
\newcommand{\ii}{\operatorname{i}\kern -0.8pt}
\newcommand{\cadlag}{c\`adl\`ag }
\newcommand{\p}{P}
\newcommand{\F}{\mathbf{F}}
\newcommand{\1}{\mathbf{1}}
\newcommand{\f}{\mathscr{F}^{\hspace{0.03cm}0}}
\newcommand{\lle}{\langle\hspace{-0.085cm}\langle}
\newcommand{\rre}{\rangle\hspace{-0.085cm}\rangle}
\newcommand{\llbr}{[\hspace{-0.085cm}[}
\newcommand{\rrbr}{]\hspace{-0.085cm}]}

\def\EM{\ensuremath{(\mathbb{EM})}\xspace}

\newcommand{\la}{\langle}
\newcommand{\ra}{\rangle}

\newcommand{\Norml}[1]{%
{|}\kern-.25ex{|}\kern-.25ex{|}#1{|}\kern-.25ex{|}\kern-.25ex{|}}

\title[Real-World and Risk-Neutral Dynamics for HJM Frameworks]{A Note On Real-World and Risk-Neutral Dynamics for Heath-Jarrow-Morton Frameworks} 
\author[D. Criens]{David Criens}
\address{D. Criens - Technical University of Munich, Center for Mathematics, Germany}
\email{david.criens@tum.de}

\keywords{Heath-Jarrow-Morton Framework, L\'evy Term Structure Models, Real-World Dynamics, Risk-Neutral Dynamics, Stochastic Partial Differential Equations, Changes of Measures\vspace{1ex}}

\subjclass[2010]{91G30,	60J25, 60H15}

\thanks{D. Criens - Technical University of Munich, Center for Mathematics, Germany,  \texttt{david.criens@tum.de}.}

\date{\today}
\maketitle

\frenchspacing
\pagestyle{myheadings}

\begin{abstract}
As a consequence of the financial crises, risk management became more important and real-world dynamics of interest-rate models moved into the focus of interest. 
Since risk-neutral dynamics are classically important to compute prices of financial derivatives, it is interesting when real-world dynamics can be related to risk-neutral dynamics via an equivalent change of measures. 
In this article we give deterministic conditions in a general Heath-Jarrow-Morton framework driven by a Hilbert space valued Brownian motion and a Poisson random measure. Our conditions are of Lipschitz type and therefore easy to verify.
\end{abstract}

\section{Introduction}
%
%
Most of the existing literature on interest rate modeling is restricted to risk-neutral dynamics. 
Mainly, this is because in bond markets future values of securities are observable and the risk-neutral dynamics can be calibrate directly to the observed prices. Thus, if one is only interested in the computation of prices, it suffices to consider a risk-neutral model.
However, since the financial crises the importance of risk management has grown and it is a consensus among quants that real-world dynamics should be given more attention, see \cite{doi:10.1142/S0219024905003049, hull, rebonato2005evolving} for discussions.

To cover both applications, risk-neutral pricing and risk management, it is indispensable to model real-world and risk-neutral dynamics simultaneously, i.e., in mathematical terms, to related them via an equivalent change of measures.

In (finite dimensional) stock markets these equivalent changes of measures are well-studied and many precise conditions are known, see, e.g., \cite{C16SP, EJ97, KS(2002b), protter}. 
In the infinite-dimensional setup of interest rates the literature is not excessive.
The existence of an equivalent change of measures in a Heath-Jarrow-Morton setting driven by Hilbert space valued Brownian motion is verified in \cite{filipovic2001consistency} under an exponential moment condition. To the best of our current knowledge, this is the only general study in a Heath-Jarrow-Morton framework.

In this note we discuss a Markovian Heath-Jarrow-Morton model framework driven not only by a Hilbert space valued Brownian motion, but also a Poisson random measure. 
Broadly speaking, we show that if risk-neutral and real-world dynamics can be defined and related via a drift condition, then it suffices that one of the dynamics have a unique law to conclude the existence of an equivalent change of measures.
In particular, this yields that in the Markovian version of the setting in \cite{filipovic2001consistency} the imposed exponential moment condition can be dropped without replacement. 
We also give deterministic conditions which are easy to verify.
In the case of the risk-neutral dynamics, these are taken from \cite{FTT1}, where it is also noted that they cannot be weakened substantially, see also \cite{morton1989arbitrage}.
Based on other results in \cite{FTT1}, we also give deterministic conditions such that the real-world and the risk-neutral dynamics only produce non-negative forward curves.
%
%

Let us shortly comment on our methodology. We introduce our financial market in the spirit of Musiela's parameterization \cite{musiela} via cylindrical martingale problems.
We stress that, under mild assumptions on the coefficients, this formalisms is equivalent to the more classical formulation based on mild solutions to stochastic partial differential equations (SPDEs). 

The following section is structured as follows.
In Section \ref{sec: 2.1}, we introduce our mathematical framework. Then, in Section \ref{sec: 2.2}, we study the existence of real-world and risk-neutral dynamics and in Section \ref{sec: 2.3} we give conditions such that these dynamics only produce non-negative forward curves. 


\section{HJMM Pairs}
\subsection{The Space of Forward Curves and Cylindrical Martingale Problems}\label{sec: 2.1}
We introduce the space of forward curves following \cite{FTT1}.
For \(\beta \in (0, \infty)\), let \(H_\beta\) be the space of absolutely continuous functions \(f \colon [0, \infty) \to \mathbb{R}\) such that 
\[
\|h\|_\beta \equiv \left( |h(0)|^2 + \int_0^\infty |h'(z)|^2 e^{\beta z} \dd z \right)^{\frac{1}{2}} < \infty.
\]
It is well-known that \(H_\beta\) is a separable Hilbert space and the shift semigroup is strongly continuous with generator \(A \equiv \frac{\dd }{\dd z}\), see \cite[Theorem 2.1]{FTT1}. 
Moreover, we define \(H^o_\beta \equiv \{f \in H_\beta \colon \lim_{z \to \infty} f(z) = 0\}\), which is a closed subspace of \(H_\beta\). 

Fix a separable Hilbert space \(U\) and a standard Borel space \((E, \mathscr{E})\), i.e. a Hausdorff measurable space \(E\) such that \(\mathscr{E}\) is countably generated and \(\sigma\)-isomorphic to the Borel \(\sigma\)-field of a Polish space. One can always think of \((E, \mathscr{E})\) to be \((\mathbb{R}^d, \mathscr{B}(\mathbb{R}^d))\) or, more generally, a Polish space equipped with its Borel \(\sigma\)-field.
Now, let \(K\) be a trace-class operator on \(U\) and let \(q(\dd t, \dd x) = \dd t \otimes F(\dd x)\) be the intensity measure of a homogeneous Poisson random measure (see \cite[Definition II.1.20]{JS}) on \((E, \mathscr{E})\). In this case, \(F\) is a \(\sigma\)-finite measure on \((E, \mathscr{E})\).
The operator \(K\) will be the covariance of the driving Brownian motion. Assuming \(K\) to be of trace-class is not restrictive. To see this, recall that for a cylindrical Brownian motion on a separable Hilbert space there is always a bigger separable Hilbert space on which the cylindrical Brownian motion is of trace-class, see, for instance, \cite{DePrato, Mikulevicius1998}.
The space \(U^o \equiv K^{\frac{1}{2}} (U)\) with the scalar product \(\la x, y \ra_{U^o} \equiv \la K^{-\frac{1}{2}} x, K^{- \frac{1}{2}} y\ra_U\), where \(x, y \in U^o\), is a separable Hilbert space.
We denote by \(L^2\) the space of Hilbert-Schmidt operators \(U^o \to H_\beta^o\) and note that the corresponding Hilbert-Schmidt scalar product is given by 
\[
\la \Phi, \Psi\ra_{L^2} = \left\la \Phi K^{\frac{1}{2}}, \Psi K^{\frac{1}{2}}\right\ra_{\textup{HS}}, \qquad \Phi, \Psi \in L^2,
\] 
where \(\la \cdot, \cdot\ra_{\textup{HS}}\) is the Hilbert-Schmidt scalar product on the space of Hilbert-Schmidt operators \(U \to H_\beta\). 
Moreover, let \(\beta' > \beta\).

Next, we introduce the parameters of our real-world market. 
Let \(a \colon H_\beta \to L^2, b \colon H_\beta \to H^o_\beta\) and \(\gamma \colon H_\beta \times E \to H^o_{\beta'}\) be Borel functions and \(h_0 \in H_\beta\). 
We suppose that \(b, a\) and \(x \mapsto \int 1 \wedge \|\gamma(x, y)\|^2_{H_{\beta'}} F(\dd y)\) are bounded on bounded subsets of \(H_\beta\). 

%
Define \(\Omega\) to be the space of \cadlag functions \([0, T] \to H_\beta\) and \(\mathbf{F} \equiv (\mathscr{F}_t)_{t \in [0, T]}\) the canonical filtration generated by the coordinate process \(X\), i.e. \(\mathscr{F}_t = \sigma(X_s, s \in [0, t])\) and \(X_t(\omega) = \omega(t)\) for \(\omega \in \Omega\). In particular, we set \(\mathscr{F} \equiv \mathscr{F}_T\).
\begin{definition}
	We call a probability measure \(P\) on \((\Omega, \mathscr{F}, \mathbf{F})\) a solution to the martingale problem (MP) associated with \((F, b, a, \gamma, h_0)\), if for all \(n \in \mathbb{N}\), \(f \in C^2_c(\mathbb{R}^n)\) and \(y_1, ..., y_n\) in the domain of \(A^*\), the adjoint of \(A\), the process
	\[
	f^* (X_\cdot) - f^*(h_0) - \int_0^\cdot \mathcal{K} f^* (X_{s-})\dd s
	\]
	is a local \((\F, P)\)-martingale. Here, \(f^*(z) = f(\la z, y_1\ra_{H_\beta}, ..., \la z, y_n\ra_{H_\beta})\) and 
	\begin{align*}
	\mathcal{K} f^*(z) \equiv  \sum_{i = 1}^n (& \langle z, A^* y_i\rangle_{H_\beta} + \langle b(z), y_i \rangle_{H_\beta})\partial_i f^*(z) \\&+  \frac{1}{2} \sum_{i=1}^n\sum_{j = 1}^n \langle a(z) a^*(z)y_i, y_j\rangle_{H_\beta} \partial^2_{ij}f^*(z) 
	\\&+ \int \left(f(\gamma(z, y) + y) - f(z) - \sum_{i = 1}^n \langle \gamma(z, y), y_i\rangle_{H_\beta} \partial_i f^*(z) \right) F(\dd y),
	\end{align*}
	where \(a^*\) denotes the adjoint of \(a\), 
	\[
	\partial_i f^*(z) \equiv (\partial_i f)(\la z, y_1\ra_{H_\beta}, ..., \la z, y_n\ra_{H_\beta})
	\]
	and \(\partial^2_{ii} f^*\) is defined in a similar manner.
	We call a MP \((F, b, a, \gamma)\) to be \emph{well-posed}, if there exists a unique solution for all initial values \(h_0 \in H_\beta\).
\end{definition}
Under very mild conditions, the set of solutions to cylindrical martingale problems coincides with the set of laws of (mild and (analytically) weak) solution processes to the Heath-Jarrow-Morton-Musiela SPDE 
\[
\dd r_r = \left(A r_t + b(r_t)\right) \dd t + a(r_t)\dd W_t + \int_E\gamma (r_{t-}, x) \left(\mu(\dd t, \dd x) - p(\dd t, \dd x)\right), 
\]
where \(W\) is a Brownian motion and \(\mu\) is a Poisson random measure with intensity measure \(p\).
For formal statements we refer to \cite[Theorem 3.12]{criens17b} and \cite[Lemmata 7.6 and 7.7]{FTT2}.

\subsection{Existence of HJMM Pairs}\label{sec: 2.2}
Next, we define a pair of a real-world and a risk-neutral Heath-Jarrow-Morton-Musiela-type financial market.

\begin{definition}
	We call a tuple \((P, Q)\) of probability measures on \((\Omega, \mathscr{F}, \mathbf{F})\) a \emph{HJMM pair} associated with \((F, b,a, \gamma, h_0)\), if \(P\) solves the MP \((F, b, a, \gamma, h_0)\), \(Q \sim P\) and for all maturities \(T^* \in [0, T]\) the discounted bond price
	\begin{align}\label{eq: dbp}
	\exp \left( - \int_0^\cdot X_s(0) \dd s - \int_0^{T^*- \cdot} X_s(z)\dd z\right)
	\end{align}
	is a local \((\F,Q)\)-martingale (time indexed on \([0, T^*]\)). 
\end{definition}
In other words, if \((P, Q)\) is a HJMM pair, then under \(P\) the coordinate process has dynamics which correspond to a typical Heath-Jarrow-Morton-Musiela market driven by a Brownian motion and a Poisson random measure, and \(Q\) is an equivalent local martingale measure (EMM) for the corresponding market. Furthermore, to reduce the complexity of the model, it is preferable that \(Q\) is \emph{structure preserving} in the sense that it also solves some MP. 

We observe the following: Let \((P, Q)\) be a pair of solutions to MPs from which (at least) one is well-posed. If the discounted bond prices are local \(Q\)-martingale for all maturities, a drift condition suffices to conclude that \((P, Q)\) is a HJMM pair.
The crucial point behind this is that the drift condition together with the uniqueness assumption already implies that \(P\) and  \(Q\) are equivalent. In particular, no exponential moment condition is needed. 

We now formalize this observation.  For later reference we state the drift-condition separately. Let \((\lambda_j)_{j \in \mathbb{N}}\) be the eigenvalues of \(K\) and \((e_j)_{j \in \mathbb{N}}\) be the corresponding eigenvectors. Set \(a_j (h) \equiv \sqrt{\lambda_j} a(h) e_j\) for \(h \in H_\beta\) and \(j \in \mathbb{N}\).
\begin{condition}\label{cond: DC}
	There are Borel functions \(\zeta \colon H_\beta \to H_\beta\) and \(Y \colon H_\beta \times H_{\beta} \to (0, \infty)\) such that \(\la a a^* \beta, \beta\ra_{H_\beta}\) and
	\begin{align*}
	&h\mapsto \int \left(1 - \sqrt{Y(h, \gamma(h, z))}\right)^2 F(\dd z)
	\end{align*}
	are bounded on bounded subsets of \(H_\beta\). Moreover, there exist an intensity measure \(p'(\dd t, \dd z) = \dd t \otimes F'(\dd z)\) of a homogeneous Poisson random measure on \((E, \mathscr{E})\) and a Borel function \(\gamma' \colon H_\beta \times E \to H_{\beta'}^o\) such that
	\(x \mapsto \int 1 \wedge \|\gamma'(x, z)\|^2_{H_{\beta'}} F'(\dd z)\) and
	\begin{align*}
	\xi(x) \equiv  & \sum_{j \in \mathbb{N}} \left(a_j(x) \int_0^\cdot a_j(x)(z)\dd z\right) 
	\\&\qquad- \int \gamma' (x, z) \left( \exp \left(- \int_0^\cdot \gamma'(x, z) (y) \dd y\right) - 1\right) F'(\dd z)
	\end{align*}
	are bounded on bounded subsets of \(H_\beta\). In particular, \(\xi\colon H_\beta \to H_\beta^o\).
	Finally, the following equations hold for all \(x \in H_\beta\) and \(G \in \mathscr{E}\):
	\begin{align}\label{eq: MPRE}
	\xi(x)
	&= b(x) + a(x)a^*(x) \zeta(x) + \int \gamma(x, y) (Y(x, \gamma(x, y)) - 1) F(\dd y),				
	\end{align} 
	and 
	\begin{align}\label{eq: Y cond}
	\int \1_G (\gamma(x, y)) Y(x, \gamma (x, y)) F(\dd y) = \int \1_G (\gamma'(x, y)) F'(\dd y).
	\end{align}
\end{condition}
	The equation \eqref{eq: MPRE} is usually called \emph{market price of risk equation (MPRE)}. In the case of markets driven by Brownian motion, the equation initiates typically a unique EMM. In particular, in the continuous case the EMM only depends on the volatility coefficient \(a\). 
	We stress that infinitely many HJMM pairs may correspond to one specify volatility coefficient \(a\). 
	
The identity \eqref{eq: Y cond} can be used to influence, for instance, integrability properties of \(F\). More precisely, if we can take \(Y(x, y) \equiv Y(x)\) and \(\gamma' \equiv \gamma\), then \(F' (\dd x) = Y(x) F(\dd x)\).
In this case, \(Y\) can be thought as a weight, such that \(F'\) has, say, exponential moments, while \(F\) does not.
Typically this is possible if the volatility coefficient \(a\) is invertible, but there are also other scenarios. 

In classical discussions of real-wold HJMM markets driven by Brownian motion, it is assumed that
\[
b (x) = \sum_{j \in \mathbb{N}} \left(a_j(x) \int_0^\cdot a_j(x)(z)\dd z\right) - a(x) a^*(x) \zeta(x)
\]
and Condition \ref{cond: DC} holds, see, e.g., \cite{carmona2007interest, filipovic2001consistency}. 
\begin{theorem}\label{theo: A1}
	Assume that Condition \ref{cond: DC} holds and suppose that \(P\) is a solution to the MP \((F, b, a, \gamma, h_0)\) and that \(Q\) is a solution to the MP \((F', \xi, a, \gamma', h_0)\) such that for all maturities \(T^*\in [0, T]\) the discounted bond prices \eqref{eq: dbp} are local \((\F,Q)\)-martingales (time indexed on \([0, T^*]\)). Then \((P, Q)\) is a HJMM pair whenever (at least) one of the MPs \((F, b, a, \gamma)\) and \((F', \xi, a, \gamma')\) is well-posed.
\end{theorem}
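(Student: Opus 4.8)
Since the hypotheses already provide that $P$ solves the martingale problem $(F,b,a,\gamma,h_0)$ and that the discounted bond prices \eqref{eq: dbp} are local $(\mathbf{F},Q)$-martingales, the only remaining assertion to verify is the equivalence $Q \sim P$. The plan is to exhibit $Q$ as an equivalent measure change of $P$ prescribed by Condition \ref{cond: DC}, and then to use well-posedness to pin down the transformed measure. Throughout I would work directly with the semimartingale characteristics of the coordinate process $X$ under $P$ -- governed by the drift $b$, the diffusion $aa^*$ and the jump compensator $F\circ\gamma^{-1}$ -- and with the change-of-measure calculus for cylindrical martingale problems developed in \cite{criens17b}.

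First I would build the candidate density process. Using the market price of risk $\zeta$ and the weight $Y$ from Condition \ref{cond: DC}, set $Z \equiv \mathcal{E}(N)$, where the continuous part of $N$ is obtained by integrating $\zeta(X_{-})$ against the continuous local martingale part $X^c$ of $X$ (whose covariation is determined by $aa^*$) and its purely discontinuous part by integrating $Y(X_{-},\Delta X)-1$ against the compensated jump measure of $X$. Then $Z$ is a nonnegative local $P$-martingale with $Z_0=1$, and it is strictly positive because $Y>0$ and the continuous exponential never vanishes. A Girsanov computation then shows that under any measure locally equivalent to $P$ with density $Z$ the diffusion $aa^*$ is preserved, the drift picks up precisely the correction on the right-hand side of \eqref{eq: MPRE} and hence becomes $\xi$, and the jump compensator is multiplied by $Y$, so that by \eqref{eq: Y cond} it becomes $F'\circ\gamma'^{-1}$. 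In other words, the $Z$-tilt of $P$ solves the target martingale problem $(F',\xi,a,\gamma',h_0)$.

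The crux -- and the place where the exponential moment condition of \cite{filipovic2001consistency} is sidestepped -- is to promote this \emph{local} change of measure to a genuine martingale and to identify it with the given $Q$. Here I would localize with the exit times $\tau_n \equiv \inf\{t\colon \|X_t\|_{H_\beta}\ge n\}\wedge T$. The assumptions in Condition \ref{cond: DC} that the coefficients (in particular $aa^*$ and $h\mapsto\int(1-\sqrt{Y(h,\gamma(h,z))})^2\,F(\de z)$) be bounded on bounded subsets of $H_\beta$ guarantee that each stopped process $Z^{\tau_n}$ has bounded characteristics and is therefore a true $P$-martingale. Consequently the measures $Q_n$ with $P$-density $Z_{\tau_n}$ are well defined, and by the previous paragraph $X^{\tau_n}$ solves the target problem up to $\tau_n$ under $Q_n$. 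Invoking local uniqueness of the well-posed target martingale problem, I would conclude that $Q_n$ and $Q$ coincide on $\mathscr{F}_{\tau_n}$; in particular $\E_P[Z_{\tau_n}]=1$ and $\E_P[Z_{\tau_n}\1_{\{\tau_n<T\}}]=Q(\tau_n<T)$.

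Finally, because both $P$ and $Q$ are probability measures on the space $\Omega$ of \cadlag paths on the \emph{finite} interval $[0,T]$, the sample paths are a.s. bounded, so $\tau_n\uparrow T$ and in fact $\tau_n=T$ eventually, both $P$-a.s. and $Q$-a.s.; hence $Q(\tau_n<T)\to 0$. Writing $\E_P[Z_T\1_{\{\tau_n=T\}}]=\E_P[Z_{\tau_n}]-\E_P[Z_{\tau_n}\1_{\{\tau_n<T\}}]=1-Q(\tau_n<T)$ and applying monotone convergence yields $\E_P[Z_T]=1$, so that $Z$ is a true $P$-martingale and $Z_T$ is the $P$-density of $Q$. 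Strict positivity of $Z_T$ then gives $Q\sim P$, and running the symmetric argument from $Q$ with density $1/Z$ covers the case in which it is the source problem $(F,b,a,\gamma)$ rather than the target problem that is well-posed. I expect the main difficulty to lie not in this final moment-free passage but in the careful justification that the localized change of measure genuinely produces solutions of the target martingale problem up to $\tau_n$ and that local uniqueness applies in the cylindrical setting, i.e.\ in the precise invocation of the change-of-measure and uniqueness theory of \cite{criens17b}.
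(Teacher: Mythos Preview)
Your proposal is correct and follows essentially the same route as the paper: build the candidate density $Z=\mathcal{E}(N)$ from $(\zeta,Y)$, localize with exit times from balls, use that the Girsanov-tilted measures $Q_n$ solve the other martingale problem up to $\tau_n$, invoke well-posedness (in its local-uniqueness form) to identify $Q_n=Q$ on $\mathscr{F}_{\tau_n}$, and then let $\tau_n\uparrow T$ to obtain $\dd Q=Z_T\,\dd P$ with $Z_T>0$. The paper presents the sketch starting from the assumption that the \emph{source} problem $(F,b,a,\gamma)$ is well-posed whereas you start from the target and handle the other case by symmetry, and the paper's stopping times also involve $\|X_{t-}\|\ge n$ (which makes the boundedness of the integrands along the stopped path immediate); these are cosmetic differences, and your identification of the delicate step---that local uniqueness in the cylindrical setting and the stopped Girsanov computation require the machinery of \cite{criens17b}---matches the paper exactly.
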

\begin{proof}
The theorem follows from \cite[Proposition 3.9]{criens17b}.
Let us explain the key ideas behind \cite[Proposition 3.9]{criens17b} under the assumption that the MP \((F, b, a, \gamma)\) is well-posed. For details we refer to \cite{criens17b}.
First, one can find a cylindrical local \(P\)-martingale \(X^c\) such that for all \(k \in H_\beta\) the real-valued process \(\la X^c, k\ra_{H_\beta}\) has quadratic variation process \(\int_0^\cdot \la a(X_s) a^*(X_s)k , k \ra_{H_\beta} \dd s\). To obtain this result, one has to extend a family of local martingales from \(D(A^*)\), the domain of \(A^*\), to the whole space \(H_\beta\).
The random measure \(\nu^X(\dd t, G) \equiv \int_E \1_G (\gamma(X_{t-}, y)) F(\dd y) \dd t\) is the \(P\)-compensator of the random measure of jumps \(\mu^X\) associated with \(X\).
To see this, note that the Borel \(\sigma\)-field \(\mathscr{B}(H_\beta)\) is generated by the \(\pi\)-system of cylindrical sets \(\{x \in H_\beta\colon (\la x, y_1\ra_{H_\beta}, ..., \la x, y_n\ra_{H_\beta}) \in G\}\) for \(y_1, ..., y_n \in D(A^*)\) and \(G \in \mathscr{B}(\mathbb{R}^n)\). Here, one uses that \(D(A^*)\) is dense in \(H_\beta\).
Now, the process
\begin{align*}
\dd Z_t &= Z_t \left(\zeta(X_t) \dd X^c_t + \int_E (Y(X_{t-}, y) - 1) \left(\mu^X(\dd t, \dd y) - \nu^X(\dd t, \dd y)\right)\right),\\ Z_0 &= 1,
\end{align*}
defines a local \((\F,P)\)-martingale with the first approach times
\[
\tau_n \equiv \inf(t \in [0, \infty) \colon \|X_t\| \geq n \text{ or } \|X_{t-}\| \geq n)\wedge n \wedge T
\]
as localizing sequence. This can be shown using the local boundedness assumptions on the coefficients. Next, for all \(n \in \mathbb{N}\) one can define a probability measure \(Q_n\) by the Radon-Nikodym derivative \(\dd Q_n = Z_{\tau_n} \dd P\). Using the well-posedness, one obtains that \(Q_n = Q\) on \(\mathscr{F}_{\tau_n}\). We stress that this is no direct consequence of the well-posedness, but relies on the Markovian structure of well-posed MPs. More precisely, one can show that for well-posed MPs, their stopped versions are also well-posed. This is related to the concept of \emph{local uniqueness} as defined in \cite{JS}.
From this identity, the fact that \(P\)-a.s. and \(Q\)-a.s. \(\tau_n \uparrow T\) as \(n \to \infty\) and the optional stopping theorem we deduce that 
\(
Q(G) = E[Z_T \1_G]
\)
for all \(G \in \mathscr{F}\).
Finally, since \(P\)-a.s. \(Z_T > 0\) by the assumption that \(Y\) maps into \((0, \infty)\), \(Q \sim P\) and the claim of theorem follows. 
\end{proof}
Next, we give deterministic conditions, which imply the prerequisites of the previous theorem.
\begin{condition}\label{cond: main}
	\begin{enumerate}
		\item[\textup{(i)}]
		Condition \ref{cond: DC} holds with \(\gamma'\) and \(F'\).
		\item[\textup{(ii)}]
		There exists a Borel function \(\phi \colon E \to [0, \infty)\) such that 
		\[
		|\Gamma(h, y)(z)| \equiv \left|\int_0^z \gamma'(h, y)(x)\dd x\right| \leq \phi(y)
		\]
		for all \(h \in H_\beta, y \in E\) and \(z \in [0, \infty)\).
		Moreover, there is a constant \(L \in (0, \infty)\) such that 
		\[\|a(h) - a(h)\|_{L^2} + \left(\int e^{\phi(z)} \|\gamma'(h, z) - \gamma'(k, z)\|^2_{H_{\beta'}} F'(\dd z)\right)^{\frac{1}{2}} \leq L \|h - k\|_{H_\beta}
		\]
		and
		\begin{align*}
		\|a(h)\|_{L^2} +
		\int e^{\phi(z)} \left(\|\gamma'(h, z)\|^2_{H_{\beta'}} \vee \|\gamma'(h, z)\|^4_{H_{\beta'}}\right) F'(\dd z) \leq L
		\end{align*}
		for all \(h, k \in H_\beta\).
		Moreover, for all \(h \in H_\beta\) the map
		\[
		[0, \infty) \ni z \mapsto \alpha(h) (z) \equiv - \int \gamma'(h, x) \left( e^{\Gamma(h, x)(z)} - 1 \right) F'(\dd x)
		\]
		is absolutely continuous with weak derivative 
		\[
		D \alpha (h) =  \int \gamma'(h, x)^2 e^{\Gamma(h, x)(z)} F'(\dd x) - \int \left(D \gamma'(h, x)\right) \left( e^{\Gamma (h, x)(z)} - 1 \right) F'(\dd x).
		\]
	\end{enumerate}
\end{condition}
Part (ii) implies that a structure preserving EMM exists. This is proven in \cite{FTT1}. 
Let us also comment on the necessity of the conditions. As discussed in \cite[p. 537]{FTT1} the boundedness assumptions in part (ii) cannot be weakened substantially. To see this, note that risk-neutral dynamics driven by a one-dimensional Brownian motion already explode for linear volatility \(a (h) = \textup{const. } h\), see \cite{morton1989arbitrage}.

.

\begin{theorem}\label{theo: A2}
	If Condition \ref{cond: main} holds, then the MP \((F', \xi, a, \gamma', h_0)\) has a unique solution \(Q\). Moreover, if the MP \((F, b, a, \gamma, h_0)\) has a solution \(P\), then \((P, Q)\) is a HJMM pair.
\end{theorem}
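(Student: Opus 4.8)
The plan is to separate the two assertions of the theorem. First I would prove that the risk-neutral martingale problem \((F', \xi, a, \gamma', h_0)\) is well-posed, with unique solution \(Q\), and that under \(Q\) the discounted bond prices are local martingales for all maturities; then I would deduce the HJMM-pair property directly from Theorem~\ref{theo: A1}. For the first step, the key point is that Condition~\ref{cond: main}(ii) is exactly the collection of deterministic Lipschitz and boundedness hypotheses under which \cite{FTT1} produces a unique, structure-preserving EMM. The drift \(\xi\) is, by its very definition in Condition~\ref{cond: DC}, the no-arbitrage Heath--Jarrow--Morton drift (in Musiela's parameterization) attached to the volatility \(a\) and the jump coefficient \(\gamma'\): its continuous part \(\sum_j a_j(x)\int_0^\cdot a_j(x)(z)\dd z\) and its jump part \(\alpha(x)\) are precisely the terms that render the discounted bond prices \eqref{eq: dbp} local martingales under a measure solving the corresponding martingale problem.

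To invoke \cite{FTT1}, I would verify that the coefficients \((\xi, a, \gamma')\) fall within its framework. The absolute continuity of \(\alpha(h)\) together with the prescribed weak derivative \(D\alpha(h)\) guarantees that \(\xi\) maps into \(H_\beta^o\) and is therefore an admissible drift. The weight \(e^{\phi(z)}\) in the Lipschitz and fourth-moment estimates of Condition~\ref{cond: main}(ii) is tailored to dominate the exponential terms \(e^{\Gamma(h, x)(z)}\) occurring in \(\xi\): using the uniform pointwise bound \(|\Gamma(h, y)(z)| \le \phi(y)\), these weighted estimates translate into genuine Lipschitz continuity and linear growth of the drift-corrected coefficients. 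Hence \cite{FTT1} yields a unique mild solution for every initial value; transferring this, via the equivalence between mild solutions and solutions of cylindrical martingale problems recalled in Section~\ref{sec: 2.1} (see \cite[Theorem 3.12]{criens17b} and \cite[Lemmata 7.6 and 7.7]{FTT2}), gives well-posedness of the martingale problem \((F', \xi, a, \gamma', h_0)\) with unique solution \(Q\), and \cite{FTT1} simultaneously provides the local martingale property of \eqref{eq: dbp}.

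With \(Q\) in hand, the second assertion is immediate from Theorem~\ref{theo: A1}: Condition~\ref{cond: DC} holds by Condition~\ref{cond: main}(i), the measure \(P\) solves \((F, b, a, \gamma, h_0)\) by hypothesis, the measure \(Q\) solves \((F', \xi, a, \gamma', h_0)\) with discounted bond prices that are local \((\F, Q)\)-martingales, and the latter martingale problem is well-posed; since (at least) one of the two martingale problems is well-posed, Theorem~\ref{theo: A1} shows that \((P, Q)\) is a HJMM pair. I expect the main obstacle to be the first assertion, and in particular the verification that the weighted Lipschitz and moment bounds survive the passage to the genuinely nonlinear drift \(\xi\), which involves the primitives \(\Gamma(h, x)\) and their exponentials rather than \(\gamma'\) itself; controlling these exponential nonlinearities through the \(e^{\phi}\)-weighting is the delicate point, whereas once the coefficients are shown to meet the hypotheses of \cite{FTT1}, the remaining steps are routine.
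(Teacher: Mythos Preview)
Your overall architecture matches the paper's: first establish well-posedness of the MP \((F', \xi, a, \gamma')\) and the local martingale property of the discounted bond prices under \(Q\) (both drawn from \cite{FTT1}), then invoke Theorem~\ref{theo: A1}. Existence of \(Q\) and the local martingale property of \eqref{eq: dbp} are indeed exactly what \cite[Theorem~3.2]{FTT1} delivers under Condition~\ref{cond: main}(ii), so that part of your plan is fine and coincides with the paper.

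However, there is a genuine gap in your well-posedness step. You write that ``\cite{FTT1} yields a unique mild solution for every initial value; transferring this, via the equivalence between mild solutions and solutions of cylindrical martingale problems \ldots\ gives well-posedness of the martingale problem''. This conflates two different notions of uniqueness. What \cite{FTT1} (and \cite[Corollary~10.9]{FTT2}) gives is \emph{pathwise} uniqueness: on a fixed filtered probability space with fixed driving Brownian motion and Poisson random measure, any two mild solutions are indistinguishable. Well-posedness of the MP, by contrast, is \emph{uniqueness in law}: any two solutions of the MP---possibly realized on different probability spaces with different driving noises---must have the same distribution. The equivalence theorem you cite only tells you that each MP solution arises as the law of some weak solution; it does not place two such weak solutions on a common space with common noise. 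Bridging this gap requires a Yamada--Watanabe-type argument, and the paper carries this out explicitly: given two MP solutions \(P^*_1, P^*_2\), it disintegrates the joint laws \((Y^i,(W^i,\mu^i))\) against the common Wiener--Poisson law \(P^o\), builds a product measure \(P^\star\) on \(\Omega\times\Omega\times\Sigma\) via the resulting kernels, verifies (using \cite{J79}) that under \(P^\star\) both coordinate processes are weak solutions driven by the \emph{same} \((W,\mu)\), and only then invokes pathwise uniqueness to conclude \(P^*_1 = P^*_2\). In the infinite-dimensional setting with jumps this construction is not entirely routine, which is why the paper sketches it.

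A secondary remark: the obstacle you single out---controlling the exponential nonlinearities in \(\xi\) through the \(e^{\phi}\)-weighting---is precisely what \cite{FTT1} already handles internally, so in the paper's proof this is absorbed into the citation and requires no separate work. The actual effort lies in the Yamada--Watanabe step above.
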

\begin{proof}
The existence of \(Q\) is due to \cite[Theorem 3.2]{FTT1} and \cite[Theorem 3.12]{criens17b}. In particular, by \cite[Theorem 3.2]{FTT1}, the discounted bond prices \eqref{eq: dbp} are local \(Q\)-martingales for all maturities. Hence, recalling Theorem \ref{theo: A1}, it suffices to prove that the MP \((F', \xi, a, \gamma')\) is well-posed.
	We use a Yamada-Watanabe-type argument. To bound the technical level of the proof we only sketch the argument.
	Denote by \((\Sigma, \mathscr{A})\) the canonical space for a Brownian motion and a Poisson random measure, see \cite[Display 14.39]{J79}.
	We denote the joint law of the Brownian motion with covariance \(K\) and the Poisson random measure with intensity measure \(\dd t \otimes F'(\dd x)\) by \(P^o\). It is well-known that \(P^o\) is unique, see, e.g., \cite{JS, stroock2010}.
	Let \(P^*_1\) and \(P^*_2\) be two solutions to the MP \((F', \xi, a, \gamma', h_0)\). By \cite[Theorem 3.12]{criens17b}, for \(i = 1, 2\) there exit a filtered probability space which supports a cylindrical Brownian motion \(W^i\), a Poisson random measure \(\mu^i\) with intensity measure \(\dd t \otimes F'(\dd x)\) and a \cadlag adapted process \(Y^i\) such that for all \(h\) in the domain of \(A^*\) we have (up to indistinguishability)
	\begin{align*}
	\la Y^i, h\ra_{H_\beta} = \int_0^\cdot& \left(\la Y^i_{s-}, A^* h\ra_{H_\beta} + \la \xi(Y^i_{s-}), h\ra_{H_\beta}\right)\dd s + \int_0^\cdot \la a(Y^i_{s-}) \dd W^i_s,  h\ra_{H_\beta} 
	\\&+ \int_0^\cdot \int \la \gamma' (Y^i_{s-}, x), h \ra_{H_\beta} (\mu^i (\dd s, \dd x) - \dd s \otimes F'(\dd x))
	\end{align*}
	such that the law of \(Y^i\) coincides with \(P^*_i\).
	
	Let \(P_i\) be the joint law of \((Y^i, (W^i, \mu^i))\), seen as a probability measure on the product space \((\Omega \times \Sigma, \mathscr{F} \otimes \mathscr{A})\). Clearly, the first marginal of \(P_i\) coincides with \(P^*_i\).
	It is well-known that there exists a decomposition
	\[
	P^i (\dd \omega, \dd \sigma) = \mathcal{P}^i (\dd \omega, \sigma) P^o(\dd \sigma), 
	\]
	where \(\mathcal{P}^i\) is a transition kernel from \((\Sigma, \mathscr{A})\) to \((\Omega \times \Sigma, \mathscr{F} \otimes \mathscr{A})\).
	Now, set 
	\[
	P^\star (\dd \omega^1, \dd \omega^2, \dd \sigma) \equiv \mathcal{P}(\dd \omega^1, \sigma) \mathcal{P}^2(\dd \omega^2,\sigma) P^o(\dd \sigma),
	\]
	seen as a probability measure on \((\Omega \times \Omega \times \Sigma, \mathscr{F} \otimes \mathscr{F} \otimes \mathscr{A})\). 
	We denote the generic element on \(\Omega \times \Omega \times \Sigma\) by \((X^1, X^2, (W, \mu))\).
	It follows along the lines of the proof of \cite[Lemma 14.86]{J79} that, up to a \(P^o\)-null set, the random variable \(\mathcal{P}^i(\cdot, G)\) is \(\mathscr{F}_t\)-measurable whenever \(G \in \mathscr{F}_t\). Hence, \cite[Hypothesis 10.43]{J79} is satisfied and we may deduce from \cite[Propositions 10.46 and 10.47]{J79} and \cite[Lemma 4.1]{criens17b} that \(W\) is a Brownian motion with covariance \(K\), \(\mu\) is a Poisson random measure with intensity measure \(\dd t \otimes F'(\dd x)\) and for all \(h\) in the domain of \(A^*\) (up to \(P^\star\)-indistinguishability) 
	\begin{align*}
	\la X^i, h\ra_{H_\beta} = \int_0^\cdot& \left(\la X^i_{s-}, A^* h\ra_{H_\beta} + \la \xi(X^i_{s-}), h\ra_{H_\beta}\right)\dd s + \int_0^\cdot \la a(X^i_{s-}) \dd W_s, h\ra_{H_\beta} 
	\\&+ \int_0^\cdot \int \la \gamma' (X^i_{s-}, x), h \ra_{H_\beta} (\mu (\dd s, \dd x) - \dd s \otimes F'( \dd x)).
	\end{align*}
	In other words, \(X^1\) and \(X^2\) are two (analytically) weak solution processes to 
	\[
	\dd Y_t = \left(AY_t + \xi(Y_t)\right)\dd t + a(Y_t) \dd W_t + \int_E \gamma'(Y_{t-}, y) \left(\mu(\dd t, \dd y) - \dd t \otimes F'( \dd y)\right)
	\]
	w.r.t. the same driving noise \(W\) and \(\mu\).
	By \cite[Corollary 10.9]{FTT2} these solutions are indistinguishable, i.e. \(P^\star (X^1_t = X^2_t \text{ for all } t \in [0, T]) = 1\). Hence, it follows readily from the definition of \(P^\star\) that \(P^*_1 = P^*_2\).
	Thus, we conclude the proof.
\end{proof}

Next, we also give explicit conditions for the existence of \(P\).
\begin{condition}\label{cond: main2}
	It holds that \(\int \|\gamma(0, z)\|^2_{H_{\beta'}} F(\dd z) < \infty\).
	Moreover, there exists a constant \(L \in (0, \infty)\) such that 
	\begin{align*}
	\|b(h) - b(k))\|_{H_\beta} + \|a(h) - a(k)\|_{L^2} &\leq L \|h - k\|_{H_\beta},\\
	\left(\int  \|\gamma(h, z) - \gamma(k, z)\|^2_{H_{\beta'}} F(\dd z)\right)^{\frac{1}{2}}&\leq L \|h - k\|_{H_\beta},
	\end{align*}
	for all \(h, k \in H_\beta\).
\end{condition}
The Lipschitz conditions in part (ii) are classical existence and uniqueness conditions. In this degree of generality we are not aware of better conditions ensuring existence.
\begin{corollary}
	Suppose that the Conditions \ref{cond: main} and \ref{cond: main2} hold. Then there are unique solutions \(P\) and \(Q\) to the MPs \((F, b, a, \gamma, h_0)\) and \((F', \xi, a, \gamma', h_0)\).
	Moreover, \((P, Q)\) is a HJMM pair.
\end{corollary}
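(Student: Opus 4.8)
The strategy is to reduce the whole statement to the well-posedness of the real-world martingale problem, since the risk-neutral side and the pairing have already been settled by the preceding results. First I would invoke Theorem \ref{theo: A2}: because Condition \ref{cond: main} is in force, the MP $(F', \xi, a, \gamma', h_0)$ admits a unique solution $Q$, and for every maturity the discounted bond prices \eqref{eq: dbp} are local $Q$-martingales. Moreover, the second assertion of Theorem \ref{theo: A2} says that \emph{as soon as} the MP $(F, b, a, \gamma, h_0)$ has a solution $P$, the tuple $(P, Q)$ is a HJMM pair. Hence the corollary follows once I show that the real-world MP $(F, b, a, \gamma, h_0)$ is well-posed: existence of a solution $P$ already yields, through Theorem \ref{theo: A2}, that $(P, Q)$ is a HJMM pair, while uniqueness of $P$ supplies the remaining ``unique solutions'' assertion. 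The sole purpose of Condition \ref{cond: main2} is to furnish this well-posedness.

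For existence I would proceed as follows. The bounds in Condition \ref{cond: main2} are global Lipschitz conditions on $b$, $a$ and $\gamma$, and together with the assumption $\int \|\gamma(0, z)\|^2_{H_{\beta'}} F(\dd z) < \infty$ they yield the associated linear growth bounds (for instance $\|b(h)\|_{H_\beta} \le \|b(0)\|_{H_\beta} + L \|h\|_{H_\beta}$, and likewise for $a$ and for $h \mapsto (\int \|\gamma(h, z)\|^2_{H_{\beta'}} F(\dd z))^{1/2}$). Under such Lipschitz and linear growth hypotheses the Heath-Jarrow-Morton-Musiela SPDE driven by $W$ and $\mu$ admits a (mild and analytically weak) solution process by the standard contraction / successive-approximation existence theory for jump SPDEs on $H_\beta$ (cf. \cite{FTT2}). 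By \cite[Theorem 3.12]{criens17b} the law of this process solves the MP $(F, b, a, \gamma, h_0)$, so a solution $P$ exists.

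For uniqueness I would re-run the Yamada-Watanabe argument already carried out in the proof of Theorem \ref{theo: A2}, now with the coefficients $(b, a, \gamma)$ and the intensity $F$ in place of $(\xi, a, \gamma')$ and $F'$. Concretely, the Lipschitz bounds of Condition \ref{cond: main2} give pathwise uniqueness for the SPDE via \cite[Corollary 10.9]{FTT2}, and coupling this with the existence of a weak solution through the same decomposition-of-kernels construction as in Theorem \ref{theo: A2} upgrades pathwise uniqueness to uniqueness in law. This establishes that the MP $(F, b, a, \gamma, h_0)$ has a unique solution $P$, which completes both the well-posedness claim and, via Theorem \ref{theo: A2}, the HJMM-pair claim.

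The step I expect to be the main obstacle is verifying that Condition \ref{cond: main2} genuinely meets the hypotheses of the existence and pathwise-uniqueness results for the jump SPDE in the $H_\beta$-framework -- in particular controlling the compensated jump integral $\int_E \gamma(r_{t-}, x)(\mu(\dd t, \dd x) - p(\dd t, \dd x))$ with respect to a possibly infinite intensity $F$ using only the $L^2(F)$-Lipschitz bound on $\gamma$ (here there is no exponential weight as in Condition \ref{cond: main}(ii), so the control is purely of $L^2$-type). Once the coefficients have been checked against \cite[Corollary 10.9]{FTT2}, however, both existence and pathwise uniqueness are routine, and the remaining measure-theoretic bookkeeping is identical to that in the proof of Theorem \ref{theo: A2}.
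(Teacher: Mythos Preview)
Your proposal is correct and follows essentially the same route as the paper: establish well-posedness of the real-world MP $(F, b, a, \gamma, h_0)$ via the Lipschitz conditions of Condition \ref{cond: main2}, invoking \cite[Corollary 10.9]{FTT2} for pathwise uniqueness and \cite[Theorem 3.12]{criens17b} to translate between SPDE solutions and MP solutions, together with the same Yamada-Watanabe argument used in the proof of Theorem \ref{theo: A2}; then conclude via Theorem \ref{theo: A2} (the paper cites Theorem \ref{theo: A1} at the final step, which is equivalent since Theorem \ref{theo: A2} already packages that application). Your additional remarks on linear growth and the $L^2(F)$-control of the jump term are accurate elaborations of what the paper leaves implicit.
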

\begin{proof}
	The existence and uniqueness of \(P\) is due to \cite[Corollary 10.9]{FTT2}, \cite[Theorem 2.1]{FTT1} and \cite[Theorem 3.12]{criens17b} together with a Yamada-Watanabe-type argument as sketched in the proof of Theorem \ref{theo: A1}. Now, Theorem \ref{theo: A1} yields the assertion. 
\end{proof}
Next, we give condition that the real-world and risk-neutral dynamics only produce non-negative forward curves.

\subsection{Positivity Preserving HJMM Pairs}\label{sec: 2.3}
From an applications point of view, we would like to choose HJMM Pairs which only produce non-negative forward curves.
In this section we transfer some conditions from \cite{FTT1} to our methodology. Denote \(\mathfrak{P} \equiv \{h \in H_\beta \colon h (s) \geq 0 \text{ for all } s \in [0, \infty)\}\).
\begin{definition}
	We call a HJMM pair \((P, Q)\) to be \emph{positivity preserving} if \(P(X_t \in \mathfrak{P} \text{ for all } t \in [0, T]) = Q(X_t \in \mathfrak{P} \text{ for all } t \in [0, T]) = 1.\)
\end{definition}
\begin{remark}\label{rem: PP}
	Since \(Q \sim P\), a HJMM pair \((P, Q)\) is positivity preserving if, and only if, either \(P(X_t \in \mathfrak{P} \text{ for all } t \in [0, T]) = 1\) or \(Q(X_t \in \mathfrak{P} \text{ for all } t \in [0, T]) = 1\).
\end{remark}
In view of the previous remark, the conditions of \cite{FTT1} for positivity preserving risk-neutral HJMM models directly transfer to our setting. 
For completeness, let us restate them.
\begin{condition}\label{cond: PP1}
The volatility coefficient \(a\) is an element of \(C^2(H_\beta, L_2)\) and the vector field \(h \mapsto \la D a, a \ra_{L^2}\) is Lipschitz continuous.
\end{condition}	
\begin{condition}\label{cond: PP2}	
	Let \((e_i)_{i \in \mathbb{N}}\) be an orthonormal basis of \(U\) and \(\gamma'\) and \(F'\) as in Condition \ref{cond: DC}.
	\begin{enumerate}
		\item[\textup{(i)}]
		For all \(h \in \mathfrak{P}\) and \(F'\)-a.a. \(x \in E\) it holds that \(h + \gamma'(h, x) \in \mathfrak{P}\).
		\item[\textup{(ii)}]
				For all \(i \in \mathbb{N}, s \in (0, \infty), h \in \{k \in H_\beta \colon k(s) = 0\}\) and \(F'\)-a.a. \(x \in E\) it holds that \(\gamma'(h, x)(s) = a (h) e_i (s) = 0\).
	\end{enumerate}
\end{condition}
\begin{theorem}
	If the Conditions \ref{cond: main} and \ref{cond: PP1} hold, then there exists a (unique) solution \(Q\) to the MP \((F', \xi, a, \gamma', h_0)\) and for any solution \(P\) of the MP \((F, b, a, \gamma, h_0)\) the tuple \((P, Q)\) is a HJMM pair which is positivity preserving if, and only if, Condition \ref{cond: PP2} holds.
\end{theorem}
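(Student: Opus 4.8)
The plan is to obtain the existence, uniqueness and HJMM-pair assertions directly from the results already established, and then to reduce the positivity statement to a pure invariance question for the risk-neutral dynamics alone, which is settled by the characterization of \cite{FTT1}.

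First, since Condition \ref{cond: main} is assumed, Theorem \ref{theo: A2} applies verbatim: it yields a unique solution \(Q\) to the MP \((F', \xi, a, \gamma', h_0)\), guarantees that the discounted bond prices \eqref{eq: dbp} are local \((\F, Q)\)-martingales for every maturity, and shows that for any solution \(P\) of the MP \((F, b, a, \gamma, h_0)\) the tuple \((P, Q)\) is a HJMM pair. Thus the first two claims require no further argument, and it remains only to characterize when \((P, Q)\) is positivity preserving. For this I would invoke Remark \ref{rem: PP}: since \(Q \sim P\) for a HJMM pair, the pair is positivity preserving if, and only if, \(Q(X_t \in \mathfrak{P} \text{ for all } t \in [0, T]) = 1\). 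Hence the entire positivity question is reduced to the single measure \(Q\), i.e.\ to the risk-neutral side, and the real-world coefficients \(b, \gamma, F\) no longer play any role.

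Next, I would identify the MP \((F', \xi, a, \gamma', h_0)\) with the risk-neutral Heath-Jarrow-Morton-Musiela dynamics. By construction in Condition \ref{cond: DC}, the drift \(\xi\) is precisely the HJM drift attached to the volatility \(a\) and the jump coefficient \(\gamma'\) relative to \(F'\); via \cite[Theorem 3.12]{criens17b} the law \(Q\) coincides with the law of the (mild and analytically weak) solution to
\[
\dd r_t = \left(A r_t + \xi(r_t)\right)\dd t + a(r_t)\dd W_t + \int_E \gamma'(r_{t-}, x)\left(\mu(\dd t, \dd x) - \dd t \otimes F'(\dd x)\right).
\]
This is exactly the class of risk-neutral models whose invariance of the cone \(\mathfrak{P}\) is characterized in \cite{FTT1}. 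Condition \ref{cond: PP1} supplies the regularity needed there: the requirement \(a \in C^2(H_\beta, L^2)\) together with the Lipschitz continuity of \(h \mapsto \la D a, a\ra_{L^2}\) makes the associated Stratonovich-type drift correction well defined and controls how the continuous part approaches the boundary of \(\mathfrak{P}\). With this regularity in force, the positivity result of \cite{FTT1} gives the sought equivalence: \(Q(X_t \in \mathfrak{P} \text{ for all } t)=1\) holds if, and only if, Condition \ref{cond: PP2} holds, where part (i) encodes invariance of \(\mathfrak{P}\) under the jumps generated by \(\gamma'\) and \(F'\), and part (ii) encodes the Nagumo-type boundary behaviour of the continuous part, namely that both the volatility directions \(a(h)e_i\) and the jump directions \(\gamma'(h,x)\) vanish at any \(s\) with \(h(s)=0\). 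Combining this with the reduction of the previous step completes the proof.

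The main obstacle I anticipate lies in this last step: to apply the characterization of \cite{FTT1} one must verify that the specific HJM drift \(\xi\) is inward-pointing (or at least tangential) along the boundary \(\{h \in H_\beta \colon h(s) = 0\}\) once Condition \ref{cond: PP2} is imposed. Because \(\xi\) is not an arbitrary Lipschitz field but is tied to \(a\) and \(\gamma'\) through the drift relation in Condition \ref{cond: DC}, checking that its boundary values are automatically compatible with the invariance conditions---rather than needing a separate hypothesis on the drift---is the delicate point. This is precisely where the structural form of \(\xi\) and the vanishing conditions of \ref{cond: PP2}(ii) must interact, and it is the step where I would lean most heavily on the boundary computations of \cite{FTT1}.
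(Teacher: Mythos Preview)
Your proposal is correct and follows essentially the same approach as the paper's proof, which simply cites Theorem \ref{theo: A2}, Remark \ref{rem: PP}, and \cite[Theorem 4.13]{FTT1}. Your anticipated obstacle about the HJM drift \(\xi\) being inward-pointing at the boundary is not a gap: it is precisely what \cite[Theorem 4.13]{FTT1} handles internally, since that result is formulated for the risk-neutral dynamics with the structural drift \(\xi\), so no separate verification is required.
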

\begin{proof}
	This follows from Theorem \ref{theo: A2}, Remark \ref{rem: PP} and \cite[Theorem 4.13]{FTT1}.
\end{proof}

In summary, we gave deterministic conditions for the existence of a pair of real-world and risk-neutral dynamics of a HJMM framework driven by Hilbert space valued Brownian motion and a Poisson random measure. In particular, our conditions imply that the real-world measure and the risk-neutral measure both solve MPs. Based on results from \cite{FTT1}, we gave conditions for a HJMM pair which produces positive forward curves under both the risk-neutral and the real-world measure.
\bibliographystyle
{chicago}
\bibliography{References}

\end{document}